\documentclass[12pt]{article}

\usepackage{amssymb}

\usepackage[utf8]{inputenc}
\usepackage[T1]{fontenc}
\usepackage{lmodern}
\usepackage{pgf,pgfarrows,pgfnodes,pgfautomata,pgfheaps}
\usepackage{amscd}
\usepackage{amsfonts}
\usepackage{latexsym}
\usepackage{graphicx}
\usepackage{amsthm}
\usepackage{amsmath}
\usepackage{placeins}
\usepackage{float}
\usepackage{pdflscape}

\def\ord{\mathrm{ord}}

\theoremstyle{definition}

\newtheorem{definicja}{Definicja}[section]

\newtheorem{theorem}[definicja]{Theorem}
\newtheorem{definition}[definicja]{Definition}
\newtheorem{corollary}[definicja]{Corollary}
\newtheorem{proposition}[definicja]{Proposition}

\newtheorem{remark}[definicja]{Remark}

\def \det{\operatorname{det}}

\begin{document}

\begin{center}
\Large
{\bf Strongly nilpotent automorphisms are Pascal finite}\\
\vspace{20pt}
\normalsize
EL\.ZBIETA ADAMUS \\ Faculty of Applied Mathematics, \\ AGH University of Krakow \\
al. Mickiewicza 30, 30-059 Krak\'ow, Poland \\
e-mail: esowa@agh.edu.pl \\

\vspace{0.5cm}
ZBIGNIEW HAJTO \\ Faculty of Mathematics and Computer Science, \\ Jagiellonian University \\
ul. \L ojasiewicza 6, 30-348 Krak\'ow, Poland \\
e-mail: zbigniew.hajto@uj.edu.pl
\end{center}

\normalsize
\vspace{1cm}

\begin{abstract}
We compare two classes of polynomial automorphisms, strongly nilpotent and Pascal finite. We conclude that every strongly nilpotent automorphism is a Pascal finite one, but not vice versa. We observe that  Nagata's automorphism is Pascal finite, but not strongly nilpotent. Considering Vasyunin example leads us to conclusion that not every quadratic polynomial automorphism is Pascal finite.

\end{abstract}

Keywords: Polynomial automorphism; Jacobian problem.\\

Mathematics Subject Classification: 14R10, 14R15

\section{Introduction}

Let $K$ be a field and $K[X]=K[X_1,\dots,X_n]$ the polynomial ring in the variables $X_1,\dots,X_n$ over $K$. A \emph{polynomial map} is a map $F=(F_1,\dots,F_n):K^n \rightarrow K^n$ of the form

$$(X_1,\dots,X_n)\mapsto (F_1(X_1,\dots,X_n),\dots,F_n(X_1,\dots,X_n)),$$

\noindent where $F_i \in K[X], 1 \leq i \leq n$. We say that $F$ is \emph{invertible} if there exists a polynomial map $G:K^n \rightarrow K^n$ such that $F\circ G=Id$ and $G\circ F=Id$.  If $K$ is a field of characteristic zero and $G$ is a left (resp. right) inverse to $F$, then it is a two-sided inverse.\\

\noindent For $F=(F_1,\dots,F_n) \in K[X]^n$, we define the \emph{degree} of $F$ as $\deg F= \max \{\deg F_i : 1\leq i \leq n\}$. Denote by $J_F$ the Jacobian matrix of $F$
$$J_F=\left(\dfrac{\partial F_i}{\partial X_j}\right)_{\substack{1\leq i \leq n \\ 1\leq j \leq n}}$$

\noindent We shall call $F$ a \emph{Keller map} if $\det J_F = \mathrm{const} \neq 0$.
By a linear change $F-F(0)$ and $[J_f(0)]^{-1}J_F$ one may assume that $F(0)=0$ and $\det J_F=1$, so $F$ is of the form $F(X)=X+H(X)$, where $X=(X_1, \ldots, X_n)$, i.e.
  \begin{equation}
   \begin{array}{llll}
             F_i(X_1, \ldots, X_n) &=& X_i+H_i(X_1, \ldots, X_n), & i=1, \ldots, n,
            \end{array}
            \label{xh}
\end{equation}
  where $H_i \in K[X]$ has degree $D_i$ and order of vanishing $d_i$, with $ d_i\geq 2$. Let $d=\min d_i, D=\max D_i$.
   It is known that if $F$ is a polynomial automorphism, then $\deg F^{-1} \leq (\deg F)^{n-1}$ (see \cite{BCW}, theorem 1.5).\\

\noindent If $F$ is an invertible polynomial map $F$, then $\det J_F = \mathrm{const} \neq 0$. The Jacobian Conjecture states that the implication in the opposite direction is also true, i.e. that every Keller map is a polynomial automorphism.

\vspace{0.5cm}
\noindent {\bf Jacobian Conjecture.} {\it Let $K$ be a field of characteristic zero. If $\mathrm{det}J_F$ is a non-zero constant, then $F$ has an inverse, which is also polynomial.}\\

\noindent The Jacobian Conjecture for quadratic maps was proved by Wang in \cite{W}.
As indicated in \cite{JP}, section 2.5, to prove the Jacobian Conjecture it is sufficient to fix $d \geq 3$ and prove,
for all $n$, that every map of the form (\ref{xh}) is a polynomial automorphism.
 Bass, Connell and Wright showed in \cite{BCW} that the general case follows from the case where $n \geq 2$ and $F = (X_1 + H_1 , \ldots , X_n + H_n )$ and where
each $H_i$ is  homogeneous of degree $3$.
For a detailed account of the research on the Jacobian Conjecture we refer to \cite{E}.

\section{Pascal finite automorphisms}

In \cite{A} an algorithm for inverting multivariate formal power series over a field of arbitrary characteristic was proposed. Let $K$ be a field and $X=(X_1,\dots,X_n)$.
Consider now $F=(F_1, \ldots, F_n) \in K[[X]]^n$.  Assume moreover that $F(0)=0$ and $F_i(X)=X_i+H_i(X)$, where $X=(X_1, \ldots, X_n)$ and the order of vanishing $ord(H_i)>1$.
 We perform an algorithm componentwise, for each $F_i$
separately. For every $i=1, \ldots, n$ we obtain the following.

\[
\begin{array}{l}
P^i_0(X)=X_i \\
P_1^i(X)=P_0^i(F)-P_0^i(X)=H_i(X)\\
P_2^i(X)=P_1^i(F)-P_1^i(X)=H_i(F)-H_i(X) \\
\ldots \\
 P_{k+1}^i(X)=P_k^i(F)-P_k^i(X)=(P_k^i \circ F-P^i_k)(X)
\end{array} \]

One can check that for any positive integer $m$, we have

\begin{equation}
 X= \sum_{k=0}^{m-1} (-1)^k P_k(F)+(-1)^m P_m(X).
 \label{formula}
\end{equation}

By \cite{A}, lemma 2 and theorem 4, if $t_i= ord (H_i)$, for $i=1, \ldots, n$ and $t=\min t_i$,
then $ord P^i_k \geq (k-1)(t-1)+t_i$ and the formal inverse $G:=F^{-1}$ is given by $G= \sum_{k=0}^{\infty} (-1)^kP_k(X)$.

The proposed approach uses only substitution and subtraction,
as a result it works for fields of arbitrary characteristic.
This approach is a generalization of the algorithm previously proposed in \cite{ABCH} for inverting polynomial automorphisms in several variables.
 In this case for a given polynomial map $F:K^n \rightarrow K^n$ we define a ring endomorphism $\sigma_F:K[X]^n \rightarrow K[X]^n$ given by
$P \mapsto P\circ F$
 and a $\sigma_F$-derivation on $K[X]^n$ given by
$\Delta_F:K[X]^n \rightarrow K[X]^n, P \mapsto \sigma_F(P)-P$.
 To a polynomial map $P$ in $K[X]^n$, we associate a sequence $P_k$ of
polynomial maps in $K[X]^n$ defined by $P_{k}=\Delta^k_F(P)$, where $\Delta^k_F$ denotes $\Delta_F \circ \stackrel{k}{\dots} \circ \Delta_F$.
If $F$ is a polynomial automorphism and we start with $P=Id$, then we obtain the inverse $G$ of $F$ given by
\begin{equation}
 G(X)= \sum_{l=0}^{m-1} (-1)^l P_l(X).
\end{equation}
We say that $F$ is \emph{Pascal finite} if there exists an integer $m$ such that $\Delta^m_F(X) = 0$.
In other words Pascal finite map is a root of a polynomial of the form $P(X)=(X-1)^m$.

By \cite{ABCH}, lemma 2.2 if $F$ is of the form (\ref{xh}), then $P^i_k$ has degree $\leq D^{k-1}D_i$ and order of vanishing $\geq (k-1)(d-1)+d_i$.
In particular, if each $H_i$ is a homogeneous polynomial of degree $d$, we have
$
P_k^i= \sum_{j=1}^{(d^k-1)/(d-1)-k+1} Q_{kj},
$
 where $Q_{kj}$ is a homogeneous
polynomial in $X_1,\dots,X_n$ of degree $(k+j-1)(d-1)+1$.

The main result of \cite{ABCH} is theorem 3.1 which formulates a criterion for invertibility
of a polynomial map $F: K^n \rightarrow K^n$  of the form (\ref{xh}).
  $F$ being invertible is  equivalent to the condition that
for $i=1,\ldots, n$ and  $m = \lfloor \frac{D^{n-1}-d_i}{d-1}+1 \rfloor +1$, we have
   \[ \sum_{j=0}^{m-1}(-1)^j P^i_j(X)=G_i(X)+R^i_m(X). \]
   where $G_i(X)$ is a polynomial of degree $ \leq D^{n-1}$,
  and $R^i_m(X)$ is a polynomial satisfying $R^i_m(F)=(-1)^{m+1}P^i_m(X)$.
  Moreover the inverse $G$ of $F$ is given by
  \[G_i(Y_1, \ldots, Y_n)=\sum_{l=0}^{m-1}(-1)^l\tilde{P}^i_l(Y_1, \ldots, Y_n), \, i=1, \ldots, n,\]
  where $\tilde{P}^i_l$ is the sum of homogeneous summands of $P^i_l$ of degree $ \leq D^{n-1}$ and $m$ is an integer $> \frac{D^{n-1}-d_i}{d-1}+1$.

  Similar result holds for power series mappings with polynomial formal inverse (see \cite{A}, corollary 5).
  Considering power series with a polynomial inverse is interesting since it raises the
natural question of whether there is a counterexample to the Jacobian Conjecture.

  In \cite{ABCH2} we establish several properties of  Pascal finite polynomial automorphisms.
  We gathered the results in the following proposition.

  \begin{proposition}
   Let $K$ be a field and let $F:K^n \rightarrow K^n$ be a polynomial map.
   Then the following holds.
   \begin{enumerate}
    \item \emph{Pascal finiteness is invariant under conjugation.} Let $T:K^n \rightarrow K^n$ be an invertible polynomial map.
    If $F$ is Pascal finite, then $\widetilde{F}:= T^{-1} \circ F \circ T$ is Pascal finite.
Moreover, if $\deg T =1$, then the minimum polynomial of $\widetilde{F}$ is the same as that of $F$.
    \item \emph{All triangular and linearly triangularizable polynomial automorphisms are Pascal finite.} \\Let $T:K^n \rightarrow K^n$ be an invertible polynomial and let $F:K^n \rightarrow K^n$ be triangular, i.e. $F_i \in K[X_i, X_{i+1}, \ldots , X_n]$ for each $1 \leq i \leq n$.
    Then both $F$ and $T^{-1} \circ F \circ T$ are Pascal finite.
    \item The inverse of a Pascal finite polynomial automorphism is Pascal finite.
    \item Powers of a Pascal finite polynomial map are Pascal finite.
    \item An arbitrary composition of two Pascal finite polynomial maps need not be Pascal finite.
    \item If $f$ and $F$ form a Gorni-Zampieri pairing then $F$ is Pascal finite if and only if $f$ is Pascal finite.
    \item \emph{All cubic linear polynomial maps of nilpotence index $\leq 3$ are Pascal finite}. Assume now that $char K=0$ and that $F$ is of the (\ref{xh}). Denote by $J_H$  the jacobian matrix  of $H:=(H_1,\dots,H_n)$. If $(JH)^2=0$, then $F$ is Pascal finite and $F^{-1}=X-H$. Moreover
    if $H_i(X_1,\dots,X_n)=L_i(X_1,\dots,X_n)^3$, where $L_i(X_1,\dots,X_n)=a_{i1}X_1+\dots +a_{in} X_n$, $1\leq i \leq n$ and  $(JH)^3=0$, then $F$ is Pascal finite.
   \end{enumerate}
   \label{pfequiv}
  \end{proposition}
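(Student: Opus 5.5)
The key observation is that Pascal finiteness is a statement about the $K$-linear operator $\sigma_F$ on $K[X]$. Let $\sigma_F(P)=P\circ F$ act on $K[X]$ and set $\Delta_F=\sigma_F-\mathrm{id}$, so that $F$ is Pascal finite iff $\Delta_F^m(X_i)=0$ for all $i$ and some $m$. It is convenient first to prove the stronger property that $\Delta_F$ is \emph{locally nilpotent} on $K[X]$, i.e. every $P\in K[X]$ is killed by some power of $\Delta_F$. This property is closed under the operations we need. The twisted Leibniz rule
$$\Delta_F(PQ)=\Delta_F(P)\,Q+P\,\Delta_F(Q)+\Delta_F(P)\Delta_F(Q)$$
gives $\Delta_F^m(PQ)=\sum \binom{m}{a,b,c}$-type combinations of $\Delta_F^{a+c}(P)\Delta_F^{b+c}(Q)$ with $a+b+c=m$. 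Hence if $\Delta_F^p(P)=0$ and $\Delta_F^q(Q)=0$, then $\Delta_F^{p+q-1}(PQ)=0$. So if $\Delta_F$ kills each $X_i$ eventually, it is locally nilpotent on all of $K[X]$.

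For item 1, note $\sigma_{T^{-1}FT}=\sigma_T\sigma_F\sigma_T^{-1}$ (with composition order reversed as appropriate), so $\Delta_{\widetilde F}=\sigma_T\Delta_F\sigma_T^{-1}$. Then $\Delta_{\widetilde F}^m(X_i)=\sigma_T\Delta_F^m(T^{-1}_i)$, and this vanishes by local nilpotence applied to the polynomial $T^{-1}_i$. When $\deg T=1$, the map $\sigma_T$ preserves the space of linear forms, and $\Delta_F^m$ kills all $X_i$ iff it kills all linear forms. So the minimal $m$, and hence the minimal polynomial $(t-1)^m$, is unchanged.

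For items 3 and 4, I use the identity $\sigma_{F^k}=\sigma_F^k=(\mathrm{id}+\Delta_F)^k$. Thus $\Delta_{F^k}=(\mathrm{id}+\Delta_F)^k-\mathrm{id}=\Delta_F\cdot(\text{polynomial in }\Delta_F)$, and this is nilpotent on the finite-dimensional $\Delta_F$-stable span of the iterates of $X_i$. For the inverse, I take $\sigma_{F^{-1}}=(\mathrm{id}+\Delta_F)^{-1}=\sum_{j<m}(-\Delta_F)^j$ on that span, which is again $\mathrm{id}$ plus a nilpotent operator.

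For item 2, I handle the triangular $F$ first, and expect the main obstacle here. I argue by downward induction on $i$: $F_n=aX_n+b$ must have $a=1$ to be Pascal finite at all. In positive characteristic or for general affine $a\neq1$, the statement is delicate, so I would first try to reduce to the standard unipotent triangular form $F_i=X_i+h_i(X_{i+1},\dots,X_n)$ and prove that case. There, $\Delta_F$ strictly lowers a suitable weighted ordering, or simply $\Delta_F(X_i)=h_i\in K[X_{i+1},\dots,X_n]$, and $\Delta_F$ preserves $K[X_{i+1},\dots,X_n]$ and is locally nilpotent there by induction together with the Leibniz bound. The conjugate case then follows from item 1.

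For item 5, I exhibit the classical example of two triangular maps, e.g. $(X+Y^2,Y)$ and $(X,Y+X^2)$, whose composition has degree growth under iteration. Pascal finiteness forces $\sigma_F^k(X_i)=\sum_{j<m}\binom{k}{j}\Delta^j(X_i)$, so $\deg F^k$ is bounded, which contradicts the exponential growth of the composite.

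For item 6, a Gorni–Zampieri pairing conjugates $\Delta$-orbits linearly after adding variables. Here I would invoke item 1 with linear $T$ together with the observation that $X_i$-coordinates of the extended map restrict to those of the original.

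For item 7, when $(JH)^2=0$ one checks $H(X-H)=H(X)-JH\cdot H$, and this term vanishes by homogeneity arguments. So $\Delta_F^2(X)=H(F)-H=0$. For the cubic-linear case I would compute $\Delta_F^k(L_i)$ directly using $(JH)^3=0$, following \cite{ABCH2}.
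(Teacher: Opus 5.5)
Your local-nilpotence lemma (the twisted Leibniz rule giving $\Delta_F^{p+q-1}(PQ)=0$) gives correct, self-contained proofs of items 1--5, which the paper only cites from \cite{ABCH2}. The genuine gap is in item 7.

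\emph{Item 7, first part.} The relation $H(X-H)=H(X)-JH\cdot H$ is only the first-order Taylor term, not an identity. What you need is $\Delta_F^2(X)=H(X+H)-H(X)=0$, so every higher term $\frac12 D^2H[H,H]+\cdots$ must vanish as well. Moreover, Euler's identity is what turns $(JH)^2=0$ into $JH\cdot H=\frac1d(JH)^2X=0$, and it requires $H$ homogeneous; form (\ref{xh}) does not assume this. Without homogeneity the claim is false. For example, $H=(3X_2X_4^2-2X_3X_4,\,X_4^2,\,X_4^3,\,0)$ has $(JH)^2=0$, yet $H_1(X+H)-H_1(X)=X_4^4$, so $F^{-1}\neq X-H$.

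For homogeneous $H$ a correct argument runs as follows. Euler gives $JH(Y)H(Y)=0$ for all $Y$. Hence $\psi(t)=H(X+tH(X))-H(X)$ satisfies $\psi'(t)=-JH(X+tH(X))\,\psi(t)$ with $\psi(0)=0$. Comparing lowest powers of $t$ (char $0$) gives $\psi\equiv 0$.

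\emph{Item 7, second part.} The cubic-linear claim with $(JH)^3=0$ is not argued at all. ``Compute directly following \cite{ABCH2}'' is the entire content of that statement.

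\emph{Item 6.} This is also not established as written, because a Gorni--Zampieri pairing is not a linear conjugacy between $f$ and the trivial extension $F\times\mathrm{id}$. The pairing conditions are, for linear $A:K^N\to K^n$ and $B:K^n\to K^N$: $AB=I_n$, $H=A\circ h\circ B$, and $h=h\circ B\circ A$ (see \cite{E}, \S 6.4). They only give the semiconjugacy $A\circ f=F\circ A$. In coordinates adapted to $K^N=\mathrm{im}\,B\oplus\ker A$ one finds $f(X,z)=(F(X),\,z+k(X))$ with $k=(I-BA)\,h\circ B$, which need not vanish.

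Item 1 together with restriction to the $X$-coordinates yields only ``$f$ Pascal finite $\Rightarrow$ $F$ Pascal finite''. For the converse you must also kill the extra coordinates: $\Delta_f^{m+1}(z_j)=\Delta_F^{m}(k_j)$. This vanishes for large $m$ precisely because $\Delta_F$ is locally nilpotent on all of $K[X]$. So your own lemma repairs the step, but it has to be applied to the $k_j$.

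\emph{Item 2 (minor).} The non-unipotent case is not ``delicate'' but false: $F=(aX_1)$ with $a\neq 0,1$ has $\Delta_F^m(X_1)=(a-1)^mX_1\neq 0$. So the hypothesis $F_i=X_i+h_i(X_{i+1},\dots,X_n)$ should simply be imposed, not ``reduced to''.
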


  For the proof see \cite{ABCH2} theorem 2.1, corollary 2.1, theorem 3.1, proposition 4.1 and
theorem 5.1. For a detailed information about Gorni-Zampieri pairing see \cite{E} \S 6.4.

An endomorphism $F=(F_1, \ldots, F_n) : K[X]^n \rightarrow K[X]^n$ is called
\emph{locally finite} if for any $g \in K[X]$ we have that $V_g=span\{ g,g(F),g(F^2), \ldots\}$ is finite dimensional or equivalently if $F$ satisfies the relation $a_d F^d+ \ldots a_1 F+a_0=0$, where not all $a_i$ are zero.
 Here $F^k$ denotes $k$-times composition of $F$ with itself. This class was widely studied by Furter and Maubach in \cite{FM}.
Observe that Pascal finite polynomial automorphisms are a subclass of locally finite ones. Indeed, if $P_m=0$ for some $m \in N$, then we get $\sum_{l=0}^m (-1)^{m-l} \binom{m}{l} F^l=0$. Not all locally finite endomorphisms are Pascal finite. To observe this consider the example
given after remark \ref{TPF}, in point I. It is locally finite, since it is affine, but not Pascal finite.

In \cite{ABCH} we proved that both properties are equivalent for polynomial automorphisms of the form (\ref{xh}). See \cite{ABCH}, proposition 2.1.
Pascal finite automorphisms are defined for a field $K$ of arbitrary characteristic and they constitute a generalization of exponential automorphisms to positive characteristic. For a detailed information about exponential automorphisms see \cite{E}, $\S$ 2.1.

\section{Relation with tame automorphisms}

Below we discuss the relations between the sets of  tame and Pascal finite automorphisms.
Recall that an endomorphism $F=(F_1, \ldots, F_n) : K[X]^n \rightarrow K[X]^n$ is called
\begin{enumerate}
 \item \textit{elementary} if it is of the form
 \[\left\{ \begin{array}{rcl}
    F_1&=&X_1\\
    &\ldots \\
    F_{i-1}&=&X_{i-1}\\
    F_i&=&X_i+a\\
    F_{i+1}&=&X_{i+1}\\
    &\ldots \\
    F_n&=&X_n
   \end{array} \right.,
\]
where $a \in K[X_1, \ldots, \hat{X_i}, \ldots, X_n]$, for $=1, \ldots, n$
\item \textit{affine} if for every $i$ $\deg{F_i}=1$,

\item \textit{tame} if it is generated by elementary and affine endomorphisms.
\end{enumerate}

 Elementary endomorphisms, affine endomorphisms, tame endomorphisms and linearly triangularizable endomorphisms are invertible.
  Affine, triangular and elementary endomorphims are locally finite and also tame.
  Affine and tame automorphisms form subgroups of $Aut_K K[X]$.
 The set of Pascal finite polynomial automorphisms is not a subgroup of the group of
polynomial automorphisms, since the composition of two Pascal finite automorphisms does not have to be Pascal finite.
For example $G=(X_1+X_2^3,X_2)$ and $H=(X_1, X_2+X_1^2)$ are Pascal finite (since they are triangular), however its composition $F=G \circ H$
is not Pascal finite (for the proof see \cite{ABCH}, example 2.1 and \cite{ABCH2} remark 3.1).

\begin{remark}
 Neither Pascal finite automorphisms form a subset of tame automorphisms, nor tame automorphisms form a subset of Pascal finite automorphisms.
Moreover these two sets have a nonempty intersection.
\label{TPF}
\end{remark}

I. \emph{Not all tame automorphisms are Pascal finite.} It is enough to consider an affine (linear) automorphism $F=(2X_1+X_2+a, X_1+X_2+b) : K^2 \rightarrow K^2$,
 where $a,b$ are constants.
 One can check that its jacobian is equal to $1$. Moreover  $P^1_1=X_1+X_2+a$, $P^1_2=2X_1+X_2+a+b$ and for every $k > 2$
 we have $P^1_{k}=P^1_{k-1}+P^1_{k-2}$. Indeed, one can check that
 $P^1_3(X) = 3X_1+2X_2+2a+b$. Let us assume that $P^1_{k}(X)=P^1_{k-1}(X)+P^1_{k-2}(X)$. We will prove that
 $P^1_{k+1}(X)=P^1_{k}(X)+P^1_{k-1}(X)$. We have that \[P^1_{k+1}(X)=P^1_k(F)-P^1_k(X) = [P^1_{k-1}(F)+P^1_{k-2}(F)]-[P^1_{k-1}(X)+P^1_{k-2}(X)]=\]
 \[= [P^1_{k-1}(F)-P^1_{k-1}(X)]+[P^1_{k-2}(F)-P^1_{k-2}(X)]=P^1_{k}(X)+P^1_{k-1}(X) .\]

 So $F$ is not Pascal finite.\\

As we observed above not every affine automorphism is Pascal finite. However the following holds.

\begin{proposition}
 Let $F:K^n \rightarrow K^n$ be an affine homogeneous polynomial map of the form $F(X)=X +AX$, where $A =[a^i_j] \in M_{n \times n}(K)$.
Then $F$ is Pascal finite if and only if $A$ is nilpotent.
\end{proposition}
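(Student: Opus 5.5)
The plan is to compute the sequence $P_k=\Delta_F^k(X)$ explicitly for the linear map $F(X)=X+AX=(I+A)X$, and then read off Pascal finiteness from a matrix identity.

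\textbf{Computing $P_k$.} I will show by induction on $k$ that $P_k(X)=A^kX$ for every $k\ge 0$, where $P_k$ is viewed as the vector $(P^1_k,\dots,P^n_k)$ of linear forms.
\begin{itemize}
\item For $k=0$ we have $P_0(X)=X$.
\item For $k=1$ we have $P_1(X)=F(X)-X=AX$.
\item Suppose $P_k(X)=A^kX$. Substituting $F$ gives $P_k(F(X))=A^k(I+A)X$.
\item Hence $P_{k+1}(X)=P_k(F)-P_k(X)=A^k(I+A)X-A^kX=A^{k+1}X$.
\end{itemize}
The key point in the induction is that composition with $F$ acts on linear forms $LX$ by right multiplication, $LX\mapsto L(I+A)X$. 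Therefore $\Delta_F$ acts on the coefficient matrix as right multiplication by $A$.

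\textbf{Reducing to a matrix condition.} Since $P_m(X)=A^mX$, the map $F$ is Pascal finite if and only if $A^mX=0$ as a polynomial map for some $m$. A linear form vanishes identically exactly when its coefficient row is zero. So $A^mX=0$ as a polynomial map if and only if $A^m=0$ as a matrix, and this equivalence holds over any field $K$.

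\textbf{Both directions.} If $A$ is nilpotent with $A^m=0$, then $\Delta^m_F(X)=0$, so $F$ is Pascal finite. Conversely, if $\Delta^m_F(X)=0$, then $A^m=0$, so $A$ is nilpotent.

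The only step needing care is the substitution identity in the inductive step, i.e. checking the convention that $(A^kX)\circ F=A^k(I+A)X$. Note that $I+A$ commutes with $A^k$, so the order of the factors does not matter.
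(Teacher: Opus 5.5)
Your proof is correct and follows essentially the same route as the paper: prove $P_k=A^kX$ by induction using $P_k(F)-P_k(X)=A^k(F-X)$, then observe that $\Delta_F^m(X)=0$ exactly when $A^m=0$. The paper phrases the conclusion via the bound that the nilpotency index is at most $n$, i.e.\ as $P_n=0$; your version works directly with an arbitrary $m$, which suffices by the definition of Pascal finiteness.
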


\textit{Proof.} Denote $X=[X_1, \ldots, X_n]^T$, $F=[F_1, \ldots, F_n]^T$ and $P_k= [P^1_k, \ldots, P^n_k]^T$ for $k\geq 0$.
We observe that $P_k=A^k X$. Indeed $P_0(X)=X$ and $P_1(X)=P_0(F)-P_0(X)=AX$. Let us assume that for a given $k$ holds $P_k(X)=A^k X$.
Then
\[P_{k+1}(X)=P_k(F)-P_k(X)=A^k F-A^kX=A^k(F-X)=A^k \cdot AX=A^{k+1}X.\]
Hence $P_k(X)=A^k X$ for every $k \geq 0$.
Consequently $P_n=0$ if and only if $A^n$ is a zero matrix, i.e.,
if and only if $A$ is nilpotent.
Since the nilpotency index of a square
matrix of order $n$ is at most $n$, $F$ is Pascal finite precisely when $A$ is nilpotent. \quad $\Box$\\

II. \emph{Not all Pascal finite automorphisms are tame.}  Consider the \textbf{Nagata} automorphism defined by
 \[ \left\{ \begin{array}{l}
     F_1=X_1-2 (X_1 X_3+X_2^2)X_2 -(X_1 X_3+X_2^2)^2 X_3\\
     F_2=X_2+(X_1 X_3+X_2^2)X_3\\
     F_3=X_3
    \end{array} \right.
\]
I. P. Shestakov and U. U. Umirbaev proved in \cite{SU}, corollary 9 that the Nagata automorphism is not tame.
However it is Pascal finite. One can compute that $P_3^1=0, P_2^2=0$.
Nagata automorphism
is locally finite, so we can conclude that not every locally finite endomorphism is tame.\\

III. \emph{Example of an automorphism which is both tame and Pascal finite.}
All elementary and triangular polynomial maps are Pascal finite and tame.

Another example can be constructed in a following way. Consider a Pascal finite polynomial map $F:K^n \rightarrow K^n$. Its extension to $K^{n+1}$, namely $\bar{F}=(F, X_{n+1})$ is also Pascal finite.
 However if we extend the Nagata automorphism (which is Pascal finite but not tame) to $K^4$, we obtain an automorphism which is tame
 (for the proof see \cite{E}, section 6.1).

 \section{Strongly nilpotent automorphisms}

 For  $F(X)=X+H(X)$ with $H(X)$ homogeneous the jacobian condition $\det J_F\in K \setminus\{0\}$ is equivalent to jacobian matrix $J_H$ being nilpotent.
For this reason
nilpotent Jacobian matrices were widely studied in the context of the Jacobian conjecture. In \cite{MO} Meisters and Olech introduced the notion of a strongly nilpotent matrix. Let us briefly recall main results.

 In this section $K$ is a field of characteristic zero. Consider $F(X)=X+H(X)$, where $H=(H_1, \ldots, H_n) \in K[X]^n$. By
 \[Y^{(1)} =(Y^{(1)}_1, \ldots, Y^{(1)}_n), \ldots, Y^{(n)} =(Y^{(n)}_1, \ldots, Y^{(n)}_n) \]
 we denote $n$ sets of $n$ variables. For each $1\leq i \leq n$ we obtain that $J_H(Y^{(i)})$ is a square matrix of order $n$ with elements in the polynomial ring $k[Y^{(i)}_j: \, 1 \leq i,j \leq n]$.

 \begin{definition}\hfill
 \begin{enumerate}
  \item The Jacobian matrix $J_H$ is called \emph{strongly nilpotent} if and only if  $J_H(Y^{(1)}) \cdot \ldots \cdot J_H(Y^{(n)})$ is the zero matrix.
  \item The Jacobian matrix $J_H$ is called \emph{strongly nilpotent with index $p$} if and only if  $J_H(Y^{(1)}) \cdot \ldots \cdot J_H(Y^{(p)})$ is the zero matrix for all $Y^{(1)}, \ldots, Y^{(p)} \in K^n$ and $p$ is the smallest integer with this property.
  \item A polynomial map $F: K^n \rightarrow K^n$, $F=Id+H$ is called \emph{strongly nilpotent (with index $p$)} if the Jacobian
matrix $J_H$ is strongly nilpotent (with index $p$).
 \end{enumerate}

 \end{definition}

 \begin{theorem}
  The following conditions are equivalent.
  \begin{enumerate}
   \item $J_H$ is strongly nilpotent.
   \item $J_H(v_1) \cdot \ldots \cdot J_H(v_n)=\textbf{0}$ for all vectors $v_1, \ldots, v_n \in K^n$.
   \item There exists $T \in GL_n(K)$ such that $J_{T^{-1}HT}$ is upper triangular with zeros on the main diagonal.
   \item $F=Id +H$ is linearly triangularizable, i.e. there exists $T \in GL_n(K)$ such that $\overline{F}:=T^{-1}HT=Id+\overline{H}$ is in upper triangular form, i.e. $H_i \in K[X_{i+1}, \ldots, X_n]$ for every $1 \leq i\leq n-1$ and $H_n \in K$.
  \end{enumerate}
  \label{snequiv}
 \end{theorem}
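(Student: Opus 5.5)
I would prove the equivalences in the cycle $1 \Leftrightarrow 2$, $2 \Rightarrow 3$, $3 \Leftrightarrow 4$, $3 \Rightarrow 1$. Throughout I read $T^{-1}HT$ as $T^{-1}\circ H\circ T$, with $T$ acting linearly, so that $J_{T^{-1}HT}(X)=T^{-1}J_H(TX)T$ by the chain rule. I also read the triangular form in 4 as "$\overline{H}_n\in K$ (and hence $=0$, since $\overline{H}(0)=0$ in our normalisation)".

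The equivalence $1\Leftrightarrow 2$ is formal. The product $J_H(Y^{(1)})\cdots J_H(Y^{(n)})$ is a matrix of polynomials in the $n^2$ variables $Y^{(i)}_j$. Since $\mathrm{char}\,K=0$, $K$ is infinite, so a polynomial vanishes identically if and only if it vanishes at every point of $K^{n^2}$. Evaluating at $Y^{(i)}=v_i$ gives exactly condition 2.

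For $3\Leftrightarrow 4$ I would use the chain rule. Condition 4 says the conjugated map $\overline H=T^{-1}H(TX)$ satisfies $\overline H_i\in K[X_{i+1},\dots,X_n]$. This holds if and only if $\partial\overline H_i/\partial X_j=0$ for all $j\le i$, that is, if and only if $J_{\overline H}$ is strictly upper triangular, which is condition 3. The direction $3\Rightarrow 1$ is then easy. We have
\[J_H(v_1)\cdots J_H(v_n)=T\,N(T^{-1}v_1)\cdots N(T^{-1}v_n)\,T^{-1},\qquad N:=J_{\overline H},\]
where each $N(w)$ is strictly upper triangular. A product of $n$ strictly upper triangular $n\times n$ matrices is zero, so condition 2 holds, and condition 1 follows by the first step.

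The main step is $2\Rightarrow 3$: constructing a single basis that triangularizes all the matrices $J_H(v)$ simultaneously. I would define a decreasing chain of subspaces $V_0=K^n$ and $V_{k+1}=\mathrm{span}\{J_H(v)w : v\in K^n,\ w\in V_k\}$. By construction, $V_k$ is spanned by vectors $J_H(v_1)\cdots J_H(v_k)w$, so $V_n=0$ by condition 2. Each $V_{k+1}\subseteq V_k$: this holds for $k=0$ trivially, and by induction $J_H(v)V_k\subseteq J_H(v)V_{k-1}\subseteq V_k$ after regrouping the spanning products. The chain therefore strictly decreases to $0$ wherever it is nonzero: if $V_{k+1}=V_k\neq 0$, the chain would stabilise at a nonzero space, contradicting $V_n=0$. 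I would choose a basis adapted to the flag, listing a basis of the smallest nonzero $V_k$ first and extending successively outward, and take $T$ to have these vectors as columns. Since $J_H(v)V_k\subseteq V_{k+1}$, each basis vector is mapped into the span of strictly earlier basis vectors. Hence $T^{-1}J_H(v)T$ is strictly upper triangular for every $v$, and in particular for $v=TX$, which gives $J_{\overline H}(X)$ strictly upper triangular as required.

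The obstacle I expect is precisely this flag construction and getting the orientation right, namely whether "upper" corresponds to dependence on later variables. If the ordering comes out reversed, I would reverse the basis order, that is, conjugate by the permutation matrix reversing the coordinates.
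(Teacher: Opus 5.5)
Your proof is correct. The paper gives no argument of its own for this theorem; it only refers to \cite{E}, theorem 7.4.4. Your proposal is therefore a self-contained replacement for a citation.

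It also shows exactly where each hypothesis enters:
\begin{itemize}
\item $1\Leftrightarrow 2$ uses only that $K$ is infinite.
\item $3\Rightarrow 1$ is the chain-rule identity $J_H(v)=T\,N(T^{-1}v)\,T^{-1}$, together with the vanishing of any product of $n$ strictly upper triangular matrices.
\item $3\Rightarrow 4$ is the one step that genuinely needs characteristic zero. A polynomial with vanishing partial derivative in $X_j$ is free of $X_j$, and this fails for $X_j^p$ in characteristic $p$.
\item $2\Rightarrow 3$ is the standard simultaneous strict triangularization of a family of matrices all of whose $n$-fold products vanish. It goes through the flag $V_{k+1}=\mathrm{span}\{J_H(v)w : v\in K^n,\ w\in V_k\}$.
\end{itemize}

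Your orientation is already right. Listing a basis of the deepest nonzero $V_k$ first puts column $j$ of $T^{-1}J_H(v)T$ in rows $i<j$. This matches $\overline H_i\in K[X_{i+1},\dots,X_n]$, so no reversal is needed. The strict decrease of the chain is not needed either: $J_H(v)V_k\subseteq V_{k+1}$ and $V_n=0$ suffice.

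Two minor points:
\begin{itemize}
\item The flag argument gives strict triangularity of $T^{-1}J_H(TX)T$ only at points $X\in K^n$. You should invoke once more that $K$ is infinite, to conclude that the entries on and below the diagonal vanish as polynomials.
\item Your parenthetical claim $\overline H_n=0$ relies on $H(0)=0$, which is not assumed in Section 4. This is harmless, since only $\overline H_n\in K$ is required.
\end{itemize}

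As a side benefit, your flag also records the index, which the bare citation does not display. If $J_H$ is strongly nilpotent with index $p$, then $V_{p-1}\neq 0=V_p$. Hence $T^{-1}J_H T$ is block strictly upper triangular with $p$ diagonal blocks.
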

For the proof see \cite{E}, theorem 7.4.4. This relation was further developed by de Bondt in \cite{B}.

Every strongly nilpotent matrix is nilpotent.
Observe that if $A$ is a square matrix of order $n$ which is upper triangular with zeros on the main diagonal, then by the Cayley-Hamilton theorem $A^n$ is a zero matrix. Hence $A$ is nilpotent. Moreover $A$ is strongly nilpotent. Theorem given above states that $J_H$ is strongly
nilpotent if and only if it is upper triangular with zeros on the main diagonal
after a suitable linear change of coordinates.
Observe that $F=Id+H$ is upper triangular if and only if $H$ is upper triangular with zeros on the main diagonal.

\begin{corollary}
 Let $F: K^n \rightarrow K^n$ is a polynomial map of the form $F=Id+H$. If $J_H$ is strongly nilpotent, then $F$ is a polynomial automorphism.
\end{corollary}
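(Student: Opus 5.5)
The plan is to reduce to the triangular case using Theorem \ref{snequiv} and then invert a triangular map of the form identity plus strictly triangular part by back substitution. Assume $J_H$ is strongly nilpotent. By the equivalence (1)$\Leftrightarrow$(4) of Theorem \ref{snequiv}, there is $T \in GL_n(K)$ such that $\overline{F} := T^{-1}\circ F \circ T = Id + \overline{H}$ is upper triangular. This means $\overline{H}_i \in K[X_{i+1}, \ldots, X_n]$ for $1 \leq i \leq n-1$ and $\overline{H}_n \in K$.

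The first step is to show that $\overline{F}$ is a polynomial automorphism. Its components are $\overline{F}_i = X_i + \overline{H}_i(X_{i+1},\dots,X_n)$, so we can solve $Y = \overline{F}(X)$ from the bottom up. First $X_n = Y_n - \overline{H}_n$. Then, inductively, $X_i = Y_i - \overline{H}_i(X_{i+1},\dots,X_n)$, where each of $X_{i+1},\dots,X_n$ has already been written as a polynomial in $Y$. This produces a polynomial map $\overline{G}$ with $\overline{G}\circ\overline{F} = Id$ and $\overline{F}\circ\overline{G}=Id$; both identities are checked componentwise by the same downward induction on $i$.

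An alternative route for this step uses the paper's own machinery: Proposition \ref{pfequiv}(2) says triangular automorphisms are Pascal finite, so $\Delta^m_{\overline F}(X)=0$, and the formula (\ref{formula}) gives an explicit polynomial left inverse $\sum_{k<m}(-1)^kP_k$. In characteristic zero a one-sided inverse is two-sided, as recalled in the introduction. Either argument suffices; I would present the direct back substitution to avoid relying on the precise hypotheses of Proposition \ref{pfequiv}(2).

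The second step transports the inverse back: $F = T\circ\overline{F}\circ T^{-1}$, so $F^{-1} = T\circ \overline{G}\circ T^{-1}$. This is a composition of polynomial maps ($T$ is linear), and hence $F$ is a polynomial automorphism.

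The only delicate point is reading condition (4) of Theorem \ref{snequiv} correctly. The notation $T^{-1}HT$ has to be interpreted as conjugation of the map $F$ by the linear automorphism $T$, i.e.\ $T^{-1}\circ F\circ T = Id + T^{-1}\circ H\circ T$. It should not be read as matrix multiplication. Once that identification is made, the rest is routine.
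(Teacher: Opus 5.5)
Your proposal is correct and follows essentially the paper's route: the paper gives no separate proof, and the corollary is read off from Theorem \ref{snequiv} (strong nilpotency gives linear triangularizability) together with the fact, asserted in Section 3, that linearly triangularizable endomorphisms are invertible. You make that second fact explicit by back substitution and conjugation by $T$, and your reading of $T^{-1}HT$ as $T^{-1}\circ F\circ T = Id + T^{-1}\circ H\circ T$ is the correct interpretation of the paper's notation.
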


\begin{remark}
 \begin{enumerate}
  \item In \cite{MO} Meisters and Olech proved that if $n \leq 4$ then for any \emph{quadratic homogeneous} polynomial map $H: K^n \rightarrow K^n$ matrix $J_H$ is nilpotent if and only if $J_H$ is strongly nilpotent. For $n \geq 5$ this is no longer true (see Vasyunin example).
  \item If $n \leq 3$ then for any \emph{cubic homogeneous} polynomial map $H: K^n \rightarrow K^n$
  matrix $J_H$ is nilpotent if and only if $J_H$ is strongly nilpotent.
  For any $n \geq 4$ this is no longer true (see \cite{E}, proposition 7.4.11).
  \item If $n \leq 2$ then for any ( not necessary homogeneous) polynomial map $H: K^n \rightarrow K^n$ matrix $J_H$ is nilpotent if and only if $J_H$ is strongly nilpotent. For any $n \geq 3$ this is no longer true (see \cite{E}, proposition 7.4.12).
 \end{enumerate}
\end{remark}

If $J_H$ is strongly nilpotent with some index p, then  $p\leq n$. In \cite{J} Johnston proved that if $F: K^n \rightarrow K^n$ is a polynomial map of the form $F=Id+H$, which is strongly nilpotent with index $p$, then $\deg F^{-1} \leq (\deg F)^{p-1}$ (see \cite{J}, theorem 1).
 This improvement of the bound for the degree of the inverse map was obtained using graph-theoretic and combinatorial approach.
 It allows us to study polynomial mappings using the language of planar trees.
It would be very interesting to investigate the mappings that are Pascal finite but not strongly nilpotent using  these methods.

\section{Strongly nilpotent automorphisms are Pascal finite}

In this section we prove that strongly nilpotent automorphisms are Pascal finite but not vice versa.
\begin{theorem}
Let $K$ be a field of characteristic zero and let $F: K^n \rightarrow K^n$ be of the form (\ref{xh}). If $F$ is strongly nilpotent, then $F$ is Pascal finite,
\end{theorem}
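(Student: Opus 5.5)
By Theorem \ref{snequiv}, strong nilpotency of $J_H$ implies that $F$ is linearly triangularizable. Concretely, there exists $T \in GL_n(K)$, viewed as the linear automorphism $X \mapsto TX$, such that $\widetilde F := T^{-1}\circ F \circ T = Id + \widetilde H$, where $\widetilde H_i \in K[X_{i+1},\ldots,X_n]$ for $i\le n-1$ and $\widetilde H_n\in K$. I will first check that conjugation by a linear map preserves the form $Id+H$. Indeed, $T^{-1}(TX + H(TX)) = X + T^{-1}H(TX)$. Hence $\widetilde F$ is a triangular polynomial map with each component of the form $X_i + (\text{polynomial in later variables})$, and in particular it is an automorphism.

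Next I show that $\widetilde F$ is Pascal finite. The shortest route is to invoke Proposition \ref{pfequiv}, point 2, which says that triangular maps are Pascal finite. To keep the argument self-contained I would also give the direct reason. Let $\Delta = \Delta_{\widetilde F}$ act on $K[X]$ componentwise. If $g \in K[X_i,\ldots,X_n]$, then
\[
\Delta g = g(X_i+\widetilde H_i, \ldots, X_n+\widetilde H_n) - g(X_i,\ldots,X_n),
\]
and this difference has strictly smaller degree in $X_i$ than $g$ (as a polynomial over $K[X_{i+1},\ldots,X_n]$), because $\widetilde H_i$ does not involve $X_i$. When $\deg_{X_i} g = 0$, the difference stays in $K[X_{i+1},\ldots,X_n]$. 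An induction downward on $i$ then shows that $\Delta^N g = 0$ for large $N$: a Taylor expansion in $X_i$ reduces the degree in $X_i$ by at least one at each step, and the base case is $i=n$, where $\widetilde H_n$ is constant. Applying this to $g = X_i$ gives $\Delta^m(X)=0$ for some $m$.

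Finally, I transfer Pascal finiteness back to $F = T\circ\widetilde F\circ T^{-1}$ using Proposition \ref{pfequiv}, point 1. This can also be seen directly. Write $\Delta_F^k$ applied to $X$ as $P_k$. Since composition with a linear map commutes with taking differences, $\Delta_F^k(L\circ T^{-1}) = \Delta_{\widetilde F}^k(L)\circ T^{-1}$ for linear forms $L$. Taking $L = $ the rows of $T$, we get $\Delta_F^k(X) = T\,\Delta_{\widetilde F}^k(X)\circ T^{-1}$, which vanishes for $k=m$.

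The only delicate point is the nested induction showing that $\Delta_{\widetilde F}$ is locally nilpotent on $K[X]$, rather than merely on the coordinate functions. The issue is that $\Delta X_i = \widetilde H_i$ is a polynomial in the later variables, so one needs nilpotence on all of $K[X_{i+1},\ldots,X_n]$. This is handled by proving, for each $i$ from $n$ down to $1$, that $\Delta$ is locally nilpotent on the whole subring $K[X_i,\ldots,X_n]$, with an inner induction on $\deg_{X_i}$. For that inner induction I use that $\Delta$ is a $\sigma$-derivation, i.e. $\Delta(ab) = \Delta a\cdot \sigma b + a\,\Delta b$. Characteristic zero is needed only through Theorem \ref{snequiv}.
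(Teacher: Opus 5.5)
Your proof is correct and follows the paper's proof: Theorem~\ref{snequiv} gives linear triangularizability, and Proposition~\ref{pfequiv} (points 1--2) gives Pascal finiteness; your transfer identity $\Delta_F^k(X)=T\,\Delta_{\widetilde F}^k(X)\circ T^{-1}$ is also right. One correction to the optional self-contained part: $\Delta g$ need not have smaller $X_i$-degree than $g$, because the coefficients in $K[X_{i+1},\ldots,X_n]$ are moved as well. If $c_m$ is the leading $X_i$-coefficient of $g$, the $X_i^m$-coefficient of $\Delta g$ is $\Delta c_m$; for instance $\widetilde F=(X_1+X_2^2,X_2+X_3^2,X_3)$ and $g=X_1X_2$ give $\Delta g=X_1X_3^2+X_2^3+X_2^2X_3^2$. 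So either induct lexicographically on $(\deg_{X_i}g,\ \min\{N:\Delta^{N}c_m=0\})$, or use $\Delta(ab)=\Delta a\,b+a\,\Delta b+\Delta a\,\Delta b$ to get $\Delta^{p+q-1}(ab)=0$ whenever $\Delta^{p}a=\Delta^{q}b=0$. The latter shows the elements killed by a power of $\Delta$ form a subring, and downward induction on $i$ via $\Delta X_i=\widetilde H_i$ finishes.
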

\textit{Proof.} By proposition \ref{snequiv}, if $J_H$ is strongly nilpotent,then $J_H$ is linearly triangularizable. By proposition \ref{pfequiv}, linearly triangularizable polynomial automorphisms are Pascal finite.  \quad $\Box$

\begin{remark}
 Nagata automorphism is Pascal finite but not strongly nilpotent.
\end{remark}
\textit{Proof.} In the example given after remark 3.1, in point II we have already seen that the Nagata automorphism is Pascal finite. It is not tame, so it is not linearly triangularizable. Hence by proposition \ref{snequiv}, $F$ is not strongly nilpotent.\quad $\Box$

\section{Vasyunin example}

Consider the following example due to Vasyunin. Let $n= 5$ and $H=(H_1, \ldots, H_5) : \mathbb{Q}^5 \rightarrow \mathbb{Q}^5$ be a polynomial map of the form
\begin{equation}
 \begin{tabular}{l}
   $H_1(X) = 0$ \\
   $H_2(X)=X_1X_3$\\
   $H_3(X)=X_1X_4+\frac{1}{2}X_2^2$\\
   $H_4(X) = X_1X_5-X_2X_3$ \\
   $H_5(X)=\frac{1}{2}X_3^2$
  \end{tabular}.
  \label{ve}
\end{equation}

One can check that $J_H$ is nilpotent with  nilpotence degree equal to $5$. It is not strongly nilpotent, since all powers of $J_H(e_1)J_H(e_2)$ for $e_1=(1,0, \ldots, 0), \, e_2=(0,1,0, \ldots, 0)$ are nonzero matrices.
By \cite{Bt}, theorem 4.6.8 it is tame.

\begin{proposition}
 The Vasyunin map $F: \mathbb{Q}^5 \rightarrow \mathbb{Q}^5$, $F=Id+H$, where $H$ is of the form (\ref{ve}) is invertible, but not Pascal finite.
\end{proposition}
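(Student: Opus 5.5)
The proof splits into invertibility and non-Pascal-finiteness, which I would handle separately.

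\emph{Invertibility.} Every component of $H$ is homogeneous quadratic and $J_H$ is nilpotent, as noted above. Hence $\det J_F=\det(I+J_H)=1$ and $F$ is a quadratic Keller map. Wang's theorem \cite{W} then shows that $F$ is a polynomial automorphism; alternatively one can use the tameness result quoted from \cite{Bt}. If an explicit check is wanted, the system $Y=F(X)$ can be solved directly. We have $X_1=Y_1$. Then $X_2$ is determined once $X_3$ is known, and the chain $X_3\to X_4\to X_5$ closes up polynomially, with $\deg F^{-1}\le 2^{4}$ by \cite{BCW}.

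\emph{Failure of Pascal finiteness.} I would use an orbit criterion. Suppose $P_m=\Delta_F^m(X)=0$ for some $m$. Then $\sum_{l=0}^m(-1)^{m-l}\binom{m}{l}F^l=0$, so for every point $p\in\mathbb{Q}^5$ the sequence $k\mapsto F^k(p)$ satisfies the linear recurrence with characteristic polynomial $(T-1)^m$. Consequently each coordinate of $F^k(p)$ is a polynomial in $k$ of degree $<m$, with $m$ independent of $p$. Applying the same reasoning to $F^k$ with the base point treated as a variable, $\deg_X F^k$ would be bounded independently of $k$. So it suffices to exhibit unbounded degree growth of $F^k$, or an orbit that is not polynomial in $k$.

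The hyperplane $X_1=0$ is useless here. On it $F$ restricts to $(X_2,X_3+\tfrac12X_2^2,X_4-X_2X_3,X_5+\tfrac12X_3^2)$, which is triangular and hence Pascal finite. The growth must therefore come from $X_1\neq0$, and this is where the failure of strong nilpotence enters: the matrix $J_H(e_1)J_H(e_2)$ is not nilpotent. My plan is to specialize to the line $X_1=a\neq0$, which is preserved, so that the induced map on $(X_2,\dots,X_5)$ is unipotent-linear, namely $X_{j}\mapsto X_j+aX_{j+1}$, plus the quadratic terms $\tfrac12X_2^2$, $-X_2X_3$ and $\tfrac12X_3^2$. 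I would track the top-degree part of $F^k$ in $(X_2,X_3)$. The composite $X_2\to X_3\to X_4\to X_5\to X_4\to X_3\to X_2$ feeds quadratic terms back into $X_2$ and $X_3$ through the linear $a$-shifts. The Jacobian of this feedback at a generic point is essentially $J_H(e_1)J_H(e_2)$, and its non-nilpotence should force the degree of $F^k$ in $X_2$ (or $X_3$) to increase along a subsequence of $k$, say by doubling every fixed number of steps.

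\emph{Main obstacle.} The hardest step is making this degree-growth claim rigorous. One must identify a specific monomial, for instance a pure power of $X_3$ with a power of $X_1$ attached, in some $P^i_k$ or $(F^k)_i$, and prove by induction on $k$ that its coefficient is nonzero and its degree tends to infinity. I would first compute $P_1,\dots,P_6$ explicitly to guess the pattern of leading monomials, and then set up the induction using the recursion $P_{k+1}=P_k\circ F-P_k$, keeping only the leading homogeneous component. This is possible because all $H_i$ are homogeneous of degree $2$, so the top component of $P_{k}\circ F$ is $P_k^{\rm top}\circ H$. Under this reduction the problem becomes showing that the iterates $H^{\circ k}$ are never identically zero. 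That non-vanishing is exactly where the non-strong nilpotence of $J_H$ has to be used.
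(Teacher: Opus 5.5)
Your invertibility argument ($J_H$ nilpotent, so $F$ is a quadratic Keller map and Wang's theorem applies) is correct. So is your reduction: $P_m=0$ gives $F^k=(1+\Delta_F)^k(X)=\sum_{l<m}\binom{k}{l}P_l$, so $\deg F^k$ is bounded. This reduction is more self-contained than the paper's appeal to the equivalence of Pascal finiteness with local finiteness.

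The gap is the degree growth itself, and the route you propose for it cannot work. Since $H_1=0$, $H$ maps into $\{X_1=0\}$. There it acts as $(0,0,\tfrac12X_2^2,-X_2X_3,\tfrac12X_3^2)$, which is the nonlinear part of the triangular restriction you wrote down yourself. Hence
\[H^{\circ2}=\bigl(0,\,0,\,\tfrac12X_1^2X_3^2,\,-X_1X_3(X_1X_4+\tfrac12X_2^2),\,\tfrac12(X_1X_4+\tfrac12X_2^2)^2\bigr),\]
$H^{\circ3}=(0,0,0,0,\tfrac18X_1^4X_3^4)$, and $H^{\circ4}=0$.

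So the recursion $P_{k+1}^{\mathrm{top}}=P_k^{\mathrm{top}}\circ H$ gives $\deg P_k=2^k$ only for $k\le3$ and says nothing afterwards. The statement you reduce to, that the $H^{\circ k}$ never vanish, is false for this very map, even though $J_H$ is not strongly nilpotent. In any case, non-strong nilpotence cannot drive degree growth on its own, since the Nagata map is Pascal finite but not strongly nilpotent. Tracking top degree in $(X_2,\dots,X_5)$ on a hyperplane $X_1=a$ fails for the same reason: the quadratic part there, $(0,\tfrac12X_2^2,-X_2X_3,\tfrac12X_3^2)$, is nilpotent under composition.

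What is missing is a mechanism that rules out cancellation among the terms the leading-component recursion cannot see, namely those generated through the shift terms $X_1X_{j+1}$. The paper gets this from positivity. Writing $F^{k+1}=F\circ F^k$ and using $(F^k)_1=X_1$, one finds
\[(F^{k+2})_3-(F^{k+1})_3=(F^{k+1})_3-(F^{k})_3+X_1^2(F^k)_5+\tfrac12X_1^2\bigl((F^k)_3\bigr)^2.\]
In this identity the only negative coefficient of $F$, the $-X_2X_3$ in $H_4$, has cancelled against the cross term of $\tfrac12(X_2+X_1X_3)^2$.

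Induction then shows that all monomials of $(F^k)_2$, $(F^k)_3$, $(F^k)_5$ and of the increments of $(F^k)_3$ have positive coefficients. Hence no cancellation can occur, and $\deg(F^{k+2})_3\ge 2+2\deg(F^{k-1})_3$, which is unbounded. An argument of this kind, or some other explicit no-cancellation device, has to replace your final step.
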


\textit{Proof.} \emph{Invertibility.} The inverse map $G(Y) = (G_1, G_2, G_3, G_4, G_5)$ is given by
\begin{center}
 \begin{tabular}{lll}
   $G_1(Y) $&=&$ Y_1$ \\
   $G_2(Y) $&=& $Y_2 - Y_1 Y_3 + Y_1^2 Y_4 - Y_1^3 Y_5 + \frac{1}{2} Y_1 Y_2^2$\\
   $G_3(Y) $&=& $Y_3 - Y_1 Y_4 + Y_1^2 Y_5 - \frac{1}{2} Y_2^2$ \\
   $G_4(Y) $&=&$ Y_4- \frac{1}{2} Y_1^5 Y_5^2 + Y_1^4 Y_4 Y_5 + \frac{1}{2} Y_1^3 Y_2^2 Y_5 - Y_1^3 Y_3 Y_5 - \frac{1}{2} Y_1^3 Y_4^2 - \frac{1}{2} Y_1^2 Y_2^2 Y_4 +Y_1^2 Y_2 Y_5$\\
   &&$  Y_1^2 Y_3 Y_4 - \frac{1}{8} Y_1 Y_2^4 + \frac{1}{2} Y_1 Y_2^2 Y_3 - Y_1 Y_2 Y_4 - \frac{1}{2} Y_1 Y_3^2 - Y_1 Y_5 - \frac{1}{2} Y_2^3 + Y_2 Y_3$ \\
   $G_5(Y) $&=&$ Y_5 - \frac{1}{2} (Y_3 - Y_1 Y_4 + Y_1^2 Y_5 - \frac{1}{2} Y_2^2)^2$ .
  \end{tabular}
\end{center}
\emph{Not being Pascal finite.} Observe that considered example is of the form (\ref{xh}). We prove that $F$  is not locally finite, which by \cite{ABCH}, proposition 2.1. is equivalent to $F$ not being Pascal finite.\\

\noindent STEP 1: If $P$ is a polynomial map, then we denote by $\sigma_F(P)=P \circ F$. Hence $\sigma_F(F)=F\circ F=F^2$ and $\sigma_F^k(F)=F^{k+1}$, for every $k \geq 1$. Moreover set $\sigma_F^0=F$ and $\sigma_F^{-1}(F)=Id$.
Let us denote the components by $\sigma_F^k(F) =\Big( \sigma_F^k(F)_1, \ldots, \sigma_F^k(F)_5\Big)$.
 For $k\geq 0$ we define the increment of $\sigma_F$ by

\[ \delta(\sigma_F^k(F))=\sigma_F^k(F)-\sigma_F^{k-1}(F), \qquad  \delta(\sigma_F^k(F))=\Big( \delta(\sigma_F^k(F))_1, \ldots, \delta(\sigma_F^k(F))_5\Big).\]
Observe that $\sigma_F^k(F)_1=\sigma_F^{-1}(F)_1=X_1$ and $\delta(\sigma_F^k(F))_1=0$ for every $k \geq -1$.

We shall prove by induction that for every $k \in \mathbb{Z}_{\geq 0}$ all monomials appearing in
$\sigma_F^k(F)_2$, $\sigma_F^k(F)_3$, $\sigma_F^k(F)_5$ and $\delta(\sigma_F^k(F))_3$
(after full reduction of similar components) have strictly positive coefficients.
It is easy to check the result for $k \in\{0,1\}$. Assume that the thesis holds for every $0 \leq r \leq k$.\\

\noindent Since $\sigma_F^{k+1}(F) = \sigma_F\Big( \sigma_F^k(F) \Big)=\sigma_F^k\Big( \sigma_F(F) \Big) $, we compute
\begin{center}
 \begin{tabular}{rcl}
   $\delta(\sigma_F^{k+1}(F))_3$&$=$&$\sigma_F^{k+1}(F)_3 - \sigma_F^{k}(F)_3$\\
   &$=$&$ \sigma_F^{k}(F)_3+\sigma_F^{k}(F)_1\sigma_F^{k}(F)_4+\frac{1}{2}\Big( \sigma_F^{k}(F)_2 \Big)^2- \sigma_F^{k}(F)_3$\\
   &$=$& $X_1\sigma_F^{k}(F)_4+\frac{1}{2}\Big( \sigma_F^{k}(F)_2 \Big)^2$\\
   &$=$&$X_1\Big(\sigma_F^{k-1}(F)_4+X_1\sigma_F^{k-1}(F)_5-\sigma_F^{k-1}(F)_2\sigma_F^{k-1}(F)_3\Big)+\dfrac 1 2 \Big(\sigma_F^{k-1}(F)_2+X_1\sigma_F^{k-1}(F)_3\Big)^2$
  \end{tabular}
\end{center}

  \noindent   Since $\dfrac 1 2 \Big(\sigma_F^{k-1}(F)_2+X_1\sigma_F^{k-1}(F)_3\Big)^2
= \frac{1}{2}\bigl(\sigma_F^{k-1}(F)_2\bigr)^2
  + X_1\sigma_F^{k-1}(F)_2 \sigma_F^{k-1}(F)_3
  + \frac{1}{2}\bigl(X_1 \sigma_F^{k-1}(F)_3\bigr)^2$  and \[\delta(\sigma_F^{k}(F))_3=X_1\sigma_F^{k-1}(F)_4+\dfrac 1 2 \sigma_F^{k-1}(F)_2^2\]
  we obtain
  \begin{equation}
 \begin{tabular}{rcl}
   $\delta(\sigma_F^{k+1}(F))_3$&$=$&$X_1 \sigma_F^{k-1}(F)_4+X_1^2 \sigma_F^{k-1}(F)_5 +\frac{1}{2}\bigl(\sigma_F^{k-1}(F)_2\bigr)^2
  + \frac{1}{2}X_1^2\bigl( \sigma_F^{k-1}(F)_3\bigr)^2$\\
  &$=$&$ \delta(\sigma_F^{k}(F))_3 +X_1^2\sigma_F^{k-1}(F)_5
  + \frac{1}{2}X_1^2\bigl( \sigma_F^{k-1}(F)_3\bigr)^2$
  \end{tabular} \label{incr}
\end{equation}

\noindent Using the induction hypothesis, we conclude that for every $k \in \mathbb{Z}_{\geq 0}$ all monomials appearing in $\delta(\sigma_F^{k+1}(F))_3$ have positive coefficients.
Moreover since \[ \sigma_F^k(F)_3=\sum_{i=0}^ k \delta(\sigma_F^i(F))_3,\]
we conclude that for every $k \in \mathbb{Z}_{\geq 0}$ all monomials appearing in $\sigma_F^{k}(F)_3$ have positive coefficients. \\

Due to  $$\sigma_F^k(F)_2=\sigma_F^{k-1}(F)_2+X_1\sigma_F^{k-1}(F)_3 \quad \mathrm{and} \quad  \sigma_F^k(F)_5=\sigma_F^{k-1}(F)_5+\dfrac 1 2 (\sigma_F^{k-1}(F)_3)^2,$$
the same holds for every $\sigma_F^k(F)_2$ and $\sigma_F^k(F)_5$.\\

\noindent STEP 2: We shall prove that $\sup_{k\geq 0} \deg \sigma_F^k(F) = \infty$. By formula \eqref{incr} and the fact that all summands in  $\sigma^k(F)_5$ have positive coefficients, we have

$$\deg \sigma_F^k(F)_3=\deg \sum_{i=0}^ k \delta(\sigma_F^i(F))_3 \geq \deg (X_1^2 \sigma_F^{k-2}(F)_5)=2+\deg ( \sigma_F^{k-2}(F)_5).$$

\noindent
Since  $F_5=X_5+\frac 1 2 X_3^2$ and all summands in  $\sigma_F^k(F)_3$ have positive coefficients, we obtain $\deg ( \sigma_F^{k}(F)_5)\geq 2 \deg ( \sigma_F^{k-1}(F)_3)$. Hence

\[ \deg \sigma_F^k(F)_3 \geq 2+2 \deg ( \sigma_F^{k-3}(F)_3) \quad \mathrm{and} \quad \sup_{k\geq 0} \deg \sigma_F^k(F) = \infty.\]
Consequently $F$ is not locally finite, so it is not Pascal finite. \quad $\Box$\\

\begin{corollary}
Not every quadratic polynomial automorphism is Pascal
finite.
\end{corollary}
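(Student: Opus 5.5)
The corollary follows by exhibiting a single witness, and the Vasyunin map from the preceding proposition is the natural choice. I would first observe that $F=Id+H$ with $H$ as in (\ref{ve}) is quadratic. Each component $H_i$ is either $0$ or a homogeneous polynomial of degree $2$, and $H_2=X_1X_3\neq 0$. Hence $\deg F=2$, and $F$ is of the form (\ref{xh}) with $d=D=2$.

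Next I would invoke the proposition just proved. It gives that $F$ is invertible, with the explicit polynomial inverse $G$ listed there, so $F$ is a quadratic polynomial automorphism of $\mathbb{Q}^5$. The same proposition shows that $F$ is not Pascal finite. Its argument runs as follows:
\begin{enumerate}
\item the positivity of the coefficients of $\sigma_F^k(F)_2$, $\sigma_F^k(F)_3$ and $\sigma_F^k(F)_5$ rules out cancellation;
\item from this one gets the recursive degree estimate $\deg \sigma_F^k(F)_3 \geq 2+2\deg \sigma_F^{k-3}(F)_3$;
\item hence $\sup_k \deg F^{k}=\infty$, so $F$ is not locally finite;
\item by \cite{ABCH}, proposition 2.1, a map of the form (\ref{xh}) that is not locally finite is not Pascal finite.
\end{enumerate}
Combining these facts, $F$ is a quadratic polynomial automorphism that is not Pascal finite, which proves the corollary.

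If one also wants the statement over an arbitrary field $K$ of characteristic zero rather than over $\mathbb{Q}$, I would argue as follows. The coefficients of $H$ and of $G$ are rational, so $F$ and $G$ are mutually inverse over $K$ as well. The degree growth of the iterates $F^k$ is a statement about the polynomials themselves, and these have rational coefficients, so it does not depend on the ground field. Hence $F$ is not Pascal finite over $K$ either.

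I see no real obstacle here. The only subtle point, the absence of cancellation in the iterates, was already handled by the positivity argument in Step 1 of the proof of the proposition.
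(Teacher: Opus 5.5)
Your proposal is correct and matches the paper. You take the quadratic Vasyunin map $F=Id+H$ as the witness and apply the preceding proposition, which shows that $F$ is invertible, and, through the positivity and degree-growth argument, that it is not locally finite and hence not Pascal finite. Your extra remark extending this to any field of characteristic zero is a harmless addition that the paper does not make.
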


\section{Final remarks}

Inspired by Johnston's combinatorial approach \cite{J}, which establishes the sharp degree bound
\[ \deg(F^{-1})\le\deg(F)^{p-1} \] for strongly nilpotent automorphisms of index $p$ via weighted rooted trees and energy sums, we observe a precise structural unification with the Pascal finiteness framework.

For homogeneous maps of degree $d$, the order of vanishing of the $k$-th Pascal polynomial $P_k^i$ satisfies
\[ \ord(P_k^i) \geq (k-1)(d-1)+d\]
(see \cite{ABCH}, lemma 2.2), which coincides exactly with the number of leaves $L_k=(d-1)k+1$ appearing in the labeled-tree expansion of the inverse (see \cite{Bis} Lemma 3.3). Consequently, the derivation operator \(\Delta_F\) acts as an algebraic bulk generator of full shuffle classes of trees, rendering Pascal finiteness a measurable algebraic closure of the combinatorial Shuffling Conjecture. This equivalence bridges the two approaches and suggests that polynomial maps, which are not strongly nilpotent yet Pascal finite (e.g. Nagata's automorphism) may be fruitfully analysed by planar tree methods.

\section*{Acknowledgments}
This research was supported by the AGH University of Krakow within subsidy of Polish Ministry of Science and Higher Education (grant no. 16.16.420.054).

\end{document}